\newcommand{\dir}{}
\newcommand\N{\mathbb N}
\newcommand\R{\mathbb R}
\newcommand{\bE}{{\boldsymbol E}}
\newcommand{\bX}{{\boldsymbol X}}
\newcommand{\bY}{{\boldsymbol Y}}
\newcommand{\bH}{{\boldsymbol H}}
\newcommand{\bu}{\boldsymbol u}
\newcommand{\bv}{\boldsymbol v}
\newcommand{\bF}{\boldsymbol F}
\newcommand{\bG}{\boldsymbol G}
\newcommand{\opmax}{\mathcal{M}}
\newcommand\dist{\operatorname{dist}}
\newcommand\spec{\sigma}
\renewcommand\div{\operatorname{div}}
\newcommand\x{\times}
\newcommand\llll{|\kern-1pt|\kern-1pt|}
\renewcommand{\L}{\mathcal{L}}
\newcommand{\E}{\mathcal{E}}
\newcommand{\dom}{\operatorname{D}}
\newcommand{\btau}{\mathbf{t}}
\newcommand{\sqr}{\mathrm{sqr}}
\newcommand{\lsp}{\mathrm{L}}
\newcommand{\lstd}{\mathrm{F}}
\newcommand{\cbe}{\mathrm{cbe}}
\newcommand{\sla}{\mathrm{sla}}
\newcommand\1{{\ensuremath {\mathds 1} }}
\theoremstyle{remark}
\newtheorem{remark}{Remark}
\theoremstyle{plain}
\newtheorem{algorithm}{Procedure}
\newtheorem{theorem}{Theorem}
\newtheorem{lemma}[theorem]{Lemma}
\DeclareMathOperator{\tr}{Tr}
\newcommand\curl{\operatorname{curl}}
\newcommand\nn{\mathbf{n}}
\newcommand\RR{\mathbb R}
\newcommand{\sobol}{\mathcal{H}}
\newcommand{\up}{\mathrm{up}}
\newcommand{\low}{\mathrm{low}}
\newcommand{\bomega}{\partial \Omega}       
\begin{document}

\title[Maxwell eigenvalue enclosures]{Finite element eigenvalue enclosures for the Maxwell operator}

\author[G.R. Barrenechea]{Gabriel R. Barrenechea}
\address{Department of Mathematics and Statistics, University of Strathclyde, 26 Richmond
Street, Glasgow G1 1XH, Scotland} 
\email{gabriel.barrenechea@strath.ac.uk}

\author[L. Boulton]{Lyonell Boulton}
\address{Department of Mathematics and Maxwell Institute for Mathematical
Sciences, Heriot-Watt University, Edinburgh, EH14 4AS, UK}
\email{L.Boulton@hw.ac.uk}

\author[N. Boussaid]{Nabile Boussa{\"\i}d}
\address{D\'epartement de Math\'ematiques, Universit\'e de Franche-Comt\'e,
Besan\c con, France}
\email{nboussai@univ-fcomte.fr}

\keywords{eigenvalue enclosures, Maxwell equation,
spectral pollution,  finite element method}

\begin{abstract}
 We propose employing the extension of the Lehmann-Maehly-Goerisch method developed by Zimmermann and Mertins, as a highly effective tool for the pollution-free finite element computation of the eigenfrequencies of the resonant cavity problem on a bounded region. This method gives complementary bounds for the eigenfrequencies which are adjacent to a given parameter $t\in \mathbb{R}$. We present a concrete numerical scheme which provides certified enclosures in a suitable asymptotic regime. We illustrate the applicability of this scheme by means of some numerical experiments on benchmark data using Lagrange elements and unstructured meshes. 
\end{abstract}

\date{14th January 2014}
\maketitle
\tableofcontents


\section{Introduction}

The framework developed by Zimmermann and Mertins \cite{ZM95} which generalizes the Lehmann-Maehly-Goerisch method \cite{1985Goerisch2,1980Goerisch,1949Lehmann,1950Lehmann,1952Maehly} (also \cite[Chapter~4.11]{1974Weinberger}), is a reliable tool for the numerical computation of bounds for the eigenvalues of linear operators in the spectral pollution regime \cite{Behnke:2009p3097,BouStr:2011man,theoretical}. In its most basic formulation \cite{davies-plum,theoretical,BoHo2013}, this framework relies on fixing a parameter $t\in \mathbb{R}$ and then characterizing the spectrum which is adjacent to $t$ by means of a combination of the Variational Principle with the Spectral Mapping Theorem. In the present paper we show that
this formulation can be effectively implemented for computing sharp estimates for the angular frequencies and electromagnetic field phasors of the resonant cavity problem by means of the finite element method.

Let $\Omega\subset \RR^3$  be a polyhedron. Denote by $\bomega$ the boundary of this region and by $\nn$ its outer normal vector.  Consider the anisotropic Maxwell eigenvalue problem: find $\omega\in \R$ and $(\bE,\bH) \not=0$ such that
\begin{equation} \label{maxwell}
\left\{
\begin{aligned}
 & 
 \begin{aligned} &\curl \bE = i\omega\mu \bH  \\
 & \curl \bH = -i\omega\epsilon \bE 
 \end{aligned}  & \text{in }\Omega  \\
 & \bE\times\nn =0& \text{on } \bomega.
 \end{aligned} \right. 
\end{equation}
The physical phenomenon of electromagnetic oscillations in a resonator is described by  \eqref{maxwell}, assuming that the  field phasor satisfies Gauss's law
\begin{equation}   \label{ansatz_div}
\div (\epsilon \bE)=0=\div (\mu \bH)  \quad \text{in }\Omega. 
\end{equation}
Here $\epsilon$ and $\mu$, respectively, are the given electric permittivity and magnetic permeability at each point of the resonator. 

The orthogonal complement in a suitable inner product \cite{1990Birman} of the solenoidal space \eqref{ansatz_div}  is the gradient space. This gradient space has infinite dimension and is part of the kernel of the densely defined linear self-adjoint operator 
\[
\opmax:\dom (\opmax)\longrightarrow L^2(\Omega)^6
\] 
associated to \eqref{maxwell}. In turns, this means that \eqref{maxwell}-\eqref{ansatz_div} and the unrestricted problem \eqref{maxwell}, have exactly the same non-zero spectrum and exactly the same eigenvectors orthogonal to the kernel. For general data, the numerical computation of $\omega$ by means of the finite element method is extremely challenging, due to a combination of variational collapse ($\opmax$ is strongly indefinite) and the fact that finite element bases seldom satisfy the ansatz \eqref{ansatz_div}. 

Several ingenious methods for the finite element treatment of the eigenproblem 
\eqref{maxwell}-\eqref{ansatz_div} have been developed in the recent past. 
Perhaps the most effective among these methods \cite{BFGP1999, Boffi-Act-Num} 
consists in re-writing the spectral problem associated to $\opmax^2$ in a mixed 
form and employing edge elements. This turns out to be linked to deep 
mathematical ideas on the rigorous treatment of finite elements 
\cite{Arnold:2010p3067} and it is at the core of an elegant geometrical 
framework. Other approaches include, \cite{Bramble05} combining nodal elements 
with a least squares formulation of  \eqref{maxwell}-\eqref{ansatz_div} 
re-written in weak form, \cite{BCJ09} employing continuous finite element spaces 
of Taylor-Hood-type  by coupling  \eqref{maxwell} with \eqref{ansatz_div} via a 
Lagrange multiplier, and \cite{BG11} enhancing the divergence of the electric 
field in a fractional order negative Sobolev norm.

In spite of the fact that some of these techniques are convergent, unfortunately, none of them provides  \textit{a priori}  guaranteed one-sided bounds for the exact eigenfrequencies. In turns, detecting the presence of a spectral cluster (or even detecting multiplicities) is extremely difficult. Below we argue that the most basic formulation of the pollution-free technique described in \cite{ZM95} can be successfully implemented for determining certified upper and lower bounds for the eigenfrequencies and 
corresponding approximated field phasors of \eqref{maxwell}. Remarkably the classical family of nodal finite elements renders sharp numerical approximations.

In Section~\ref{settings} we fix the rigorous setting of the self-adjoint 
operator $\opmax$ and set 
our concrete assumptions on the data of the problem. For these concrete 
assumptions we consider both a region $\Omega$ with and without cylindrical 
symmetries, generally non-convex and not even Lipschitz. In 
Section~\ref{feceb} we describe the finite element realization of the 
computation of complementary eigenvalue bounds. Based on this realization, in 
Section~\ref{numstratnut} an algorithm providing certified
eigenvalue enclosures in a given interval is presented and analyzed. This 
algorithm is then implemented and its results are reported in 
Sections~\ref{convex-domains}-\ref{transmission}.


\section{Abstract setting of the Maxwell eigenvalue problem}
\label{settings}
\subsection{Concrete assumptions on the data}  
\label{Assumptions}
The concrete assumptions on the data of  equation \eqref{maxwell} made below are as follows.
The polyhedron $\Omega\subset \mathbb{R}^3$ will always be open, bounded and simply connected.
The permittivities will always be such that
 \begin{equation} \label{bdd_away_from_0}
 \epsilon,\,\frac 1\epsilon,\,\mu,\,\frac 1\mu \in L^\infty(\Omega).
\end{equation}

Without further mention, the non-zero spectrum of  $\opmax$ will be assumed to be purely discrete and it does not accumulate at $\omega=0$. This hypothesis is verified, for example, whenever $\Omega$ is a polyhedron with  a Lipschitz boundary, \cite[Corollary 3.49]{Monk2003} and \cite[Lemma 
1.3]{1990Birman}. A more systematic analysis of the spectral properties 
of $\opmax$ on more general regions $\Omega$ is being carried out elsewhere 
 \cite{BBBP}.

\subsection{The self-adjoint Maxwell operator}
\label{3dmaxwell}
We follow closely \cite{1990Birman}. Let
\[
\begin{aligned}
   \sobol(\curl;\Omega)&=\left\{\bu \in L^2(\Omega)^3 : \curl \bu \in
L^2(\Omega)^3 \right\} \\
\sobol_0(\curl;\Omega) 
& = \{ \bu \in \sobol(\curl;\Omega) :\int_\Omega \curl \bu \cdot \bv =
\int_\Omega \bu \cdot \curl \bv
 \quad \forall \bv \in \sobol(\curl;\Omega)\}.
\end{aligned}
\]
The linear space $\sobol(\curl;\Omega)$ becomes a Hilbert space for the norm
\begin{equation*} 
    \| \bu \|_{\curl,\Omega}=\left(\|\bu\|_{0,\Omega}^2+\|\curl\bu\|_{0,\Omega}^2\right)^{1/2},
\end{equation*}
where 
\[
    \|\bv\|_{0,\Omega}=\left( \int_\Omega |  \bv|^2 \right)^{1/2}
\]
is the corresponding norm of $L^2(\Omega)^3$.
By virtue of Green's identity for the rotational \cite[Theorem~I.2.11]{1986Giraultetal}, if $\Omega$ is a Lipschitz
domain \cite[Notation~2.1]{ABDG98}, then $\bu\in
\sobol_0(\curl;\Omega)$ if and only if $\bu\in \sobol(\curl;\Omega)$ and
$\bu\times\nn ={\mathbf 0} \;\mathrm{on}\;\bomega$. 
Moreover 
\begin{equation}    \label{closure}
\sobol_0(\curl;\Omega)^3=\overline{C^\infty_0(\Omega)^3},
\end{equation}
where the closure is in the norm $\|\cdot\|_{\curl,\Omega}$.

A domain of self-adjointness of the operator associated to \eqref{maxwell} for
$\epsilon=\mu=1$ is \[\mathcal{D}_1=\sobol_0(\curl;\Omega)\times \sobol(\curl;\Omega)\subset L^2(\Omega)^6\]
and its action is given by 
\[
\opmax_1 =\begin{bmatrix} 0 & i \curl \\ -i \curl
& 0\end{bmatrix} : \mathcal{D}_1 \longrightarrow L^2(\Omega)^6.
\]
Let 
\[
      \mathcal{P}=\begin{bmatrix}\epsilon^{1/2} I_{3\times 3}& 0\\
0 & \mu^{1/2} I_{3\times 3} \end{bmatrix}.
\]
Condition \eqref{bdd_away_from_0} ensures that $\mathcal{P}:L^2(\Omega)^6\longrightarrow L^2(\Omega)^6$ is bounded and invertible. Moreover, \[\left(\omega,\begin{bmatrix}\bE  \\ \bH\end{bmatrix} \right)\in \mathbb{R}\times \mathcal{D}_1\] is a solution of \eqref{maxwell}, if and only if
\[\begin{bmatrix}\tilde{\bE} \\ \tilde{\bH}\end{bmatrix}=\mathcal{P}\begin{bmatrix}\bE \\ \bH\end{bmatrix}\] is  a solution of
\[
     \mathcal{P}^{-1}\opmax_1 \mathcal{P}^{-1}\begin{bmatrix} \tilde\bE \\ \tilde\bH \end{bmatrix} =\omega \begin{bmatrix} \tilde\bE \\ \tilde\bH \end{bmatrix}.
\] 
Therefore $\opmax=\mathcal{P}^{-1}\opmax_1 \mathcal{P}^{-1}$ on $\dom(\opmax)=\mathcal{P}\mathcal{D}_1$ is the self-adjoint operator associated to \eqref{maxwell}. 

As $\opmax$ anticommutes with complex conjugation, the spectrum 
is symmetric with respect to $0$. Moreover, $\ker(\opmax)$ is infinite dimensional, because it always contains the gradient space, see~\cite{1990Birman}.

\subsection{Isotropic cylindrical symmetries} \label{cylindrical}

If $\Omega=\tilde{\Omega}\times (0,\pi)$ for $\tilde{\Omega}\subset \mathbb{R}^2$ an open simply connected polygon, then \eqref{maxwell} decouples by separating the variables for  $\epsilon=\mu=1$. In turns, a non-zero $\omega$ is an eigenvalue of $\opmax_1$, if and only if either $\omega^2=\lambda^2$  where $\lambda^2$ is a Dirichlet eigenvalue of the Laplacian in $\tilde{\Omega}$, or $\omega^2=\nu^2+\rho^2$ where $\nu^2$ is a non-zero Neumann eigenvalue of the Laplacian in $\tilde{\Omega}$ and $\rho\in \mathbb{N}$. 

The Neumann problem  can be re-written as ($\nu=\omega$)
\begin{equation}  \label{maxwell_2d}      
 \left\{
\begin{aligned}
& \begin{aligned}&\curl \bE= i\omega H \\ & \curl H=
-i\omega\bE  \end{aligned}    & \text{in }\tilde{\Omega}  \\
 & \bE \cdot \btau = \mathbf{0}& \text{on } \partial \tilde{\Omega}\,,
 \end{aligned}\right. 
 \end{equation}
for \[\left(\omega,\begin{bmatrix}\bE \\ H\end{bmatrix}\right)\in \R\times (\tilde{\mathcal{D}}_1\setminus\{0\}).\] 
Here 
\[
 \bE=\begin{bmatrix}E_1 \\ E_2 \end{bmatrix}, \qquad \curl \bE=\partial_x E_2-\partial_y E_1, \qquad 
\curl H=  \begin{bmatrix}\partial_y H \\ -\partial_x H\end{bmatrix},
\]
$\btau$ is the unit tangent to $\partial \tilde{\Omega}$ and 
\[
   \tilde{\mathcal{D}}_1\!=\!
   \left\{\!\bu \in L^2(\Omega)^2 : \curl \bu \in
L^2(\Omega)\text{ and }\bu \cdot \btau = \mathbf{0}\! \right\} \times
\left\{\!u \in L^2(\Omega) : \curl u \in
L^2(\Omega)^2\!\right\}. 
\]
This two-dimensional Maxwell problem exhibits all the complications concerning spectral pollution as its three-dimensional counterpart. 

We denote by $\tilde{\opmax}:\tilde{\mathcal{D}}\longrightarrow L^2(\tilde{\Omega})^3$ the self-adjoint operator associated to \eqref{maxwell_2d}. This operator has often been employed for tests which can then be validated against numerical calculations for the original Neumann Laplacian via the Galerkin method, \cite{2004Dauge}. Note that the latter is a semi-definite operator with a compact resolvent, so it does not exhibit spectral pollution.


\section{Finite element computation of the eigenvalue bounds}  \label{feceb}

The basic setting of the general method proposed in \cite{ZM95} is achieved by deriving eigenvalue bounds directly from  \cite[Theorem~1.1]{ZM95}, as described in \cite[Section~6]{davies-plum} and \cite{theoretical}.  We will see next that, from this setting, a general finite element scheme for computing guaranteed bounds for the eigenvalues of $\opmax$ which are in the vicinity of a given non-zero $t\in \mathbb{R}$ can be established.  

\subsection{Formulation of the weak problem and eigenvalue bounds} \label{weak_prob}
Let $\{\mathcal{T}_h\}_{h>0}$ be a family of shape-regular \cite{EG04} triangulations of $\overline{\Omega}$, where the elements
$K\in {\mathcal{T}}_h$ are simplexes  with diameter $h_K$ and $h=\max_{K\in{\mathcal{T}}_h}h_K$. 
For $r\ge 1$, let 
\begin{align*}
    \mathbf{V}_h^r &=\{\bv_h\in C^0(\overline{\Omega})^3: \bv_h|_K \in
\mathbb{P}_r(K)^3 \ 
    \forall K\in \mathcal{T}_h \} \\
    \mathbf{V}_{h,0}^r &=\{\bv_h\in \mathbf{V}_h^r: \bv_h\x\nn ={\mathbf 0}
\;\textrm{on}\;\bomega \}.
\end{align*}
Then
\begin{equation}\label{fe-space}
 \L\equiv \L_{h}=\mathbf{V}_{h,0}^r\x \mathbf{V}_h^r \subset \mathcal{D}_1.
\end{equation}
For $t\in \mathbb{R}$, let $\mathfrak{m}^p_t:\mathcal{D}_1\times \mathcal{D}_1 \longrightarrow \mathbb{C}$ be given by
\[
    \begin{aligned}
    \mathfrak{m}^1_t\left(  \begin{bmatrix}\bE \\ \bH \end{bmatrix}, \begin{bmatrix}\bF \\ \bG \end{bmatrix} \right)&=
    \int_\Omega \left((\opmax_1-t\mathcal{P}^2) \begin{bmatrix}\bE \\ \bH \end{bmatrix} \right) \cdot \begin{bmatrix}\bF \\ \bG \end{bmatrix} 
    \\
    \mathfrak{m}^2_t\left(  \begin{bmatrix}\bE \\ \bH \end{bmatrix}, \begin{bmatrix}\bF \\ \bG \end{bmatrix} \right)&=
    \int_\Omega \left((\mathcal{P}^{-1}\opmax_1-t\mathcal{P}) \begin{bmatrix}\bE \\ \bH \end{bmatrix}\right) \cdot 
    \left((\mathcal{P}^{-1}\opmax_1-t\mathcal{P}) \begin{bmatrix}\bF \\ \bG \end{bmatrix} \right)
    \end{aligned}
\]
The following weak eigenvalue problem \cite{ZM95,davies-plum,theoretical} plays a central role below: 
\begin{equation}   \label{weak}
\begin{aligned}
&\text{find }
\left(\tau,\begin{bmatrix}\bE  \\ \bH\end{bmatrix} \right)\in \mathbb{R}\times (\L\setminus \{0\}) \text{ such that } \\
    &\mathfrak{m}^1_t\left(  \begin{bmatrix}\bE \\ \bH \end{bmatrix}, \begin{bmatrix}\bF \\ \bG \end{bmatrix} \right) =
    \tau \mathfrak{m}^2_t\left(  \begin{bmatrix}\bE \\ \bH \end{bmatrix}, \begin{bmatrix}\bF \\ \bG \end{bmatrix} \right)
    \qquad \forall \begin{bmatrix}\bF \\ \bG \end{bmatrix}\in \L .
    \end{aligned}
\end{equation}

Let $m^{\pm}(t)\equiv m^\pm(t,h)$ be the number of negative and positive eigenvalues of \eqref{weak}, respectively. Let
$\tau^{\pm}_j(t)\equiv  \tau^{\pm}_j(t,h)$,
\[\tau^-_1(t) \leq \ldots \leq \tau^-(t)_{m^-(t)}\] be the negative eigenvalues of \eqref{weak} and
\[\tau^+_{m^+(t)}(t)\leq \ldots \leq \tau^+_{1}(t)\] be the positive eigenvalues of \eqref{weak}, if they exist at all.
Let
\[
     \rho_j^\pm(t,h)=t+\frac{1}{\tau_j^\pm(t)}.
\]
As we will see next, the latter quantities provide bounds for the spectrum of $\opmax$ in the vicinity of $t$.

By counting multiplicities, let 
\[
     \ldots \leq \nu_2^-(t) \leq \nu_1^-(t) < t < \nu_1^+(t) \leq \nu_2^+(t) \leq \ldots
\]
be the eigenvalues of $\opmax$ which are adjacent to $t$. That is
$\nu_j^-(t)$ is the $j$-th eigenvalue strictly to the left of $t$ and $\nu_j^+(t)$ is the $j$-th eigenvalue strictly 
to the right of $t$.  The following crucial statement is a direct consequence of \cite[Theorem~2.4]{ZM95} or \cite[Corollary~7]{theoretical} (see also
\cite[Theorem~11]{davies-plum}).

\begin{theorem} \label{bounds}
    Let $t\in\R$. Then
    \[
            \rho_j^-(t,h) \leq \nu_j^-(t) \quad \forall j=1,\ldots,m^-(t) \quad \text{and} \quad \nu_j^+(t) \leq \rho_j^+(t,h)
            \quad \forall j=1,\ldots,m^+(t).
    \]
\end{theorem}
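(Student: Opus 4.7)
The plan is to reduce the weak problem~\eqref{weak} to the standard Zimmermann--Mertins quadratic eigenvalue pencil for the self-adjoint operator $\opmax$ on a transformed trial subspace, and then to invoke \cite[Theorem~2.4]{ZM95} (equivalently \cite[Corollary~7]{theoretical}) directly. The reason~\eqref{weak} is phrased in terms of $\opmax_1$ and $\mathcal{P}$ rather than $\opmax$ is purely computational: all integrals live in $L^2(\Omega)^6$ and there is no need to apply $\mathcal{P}^{-1}$ during assembly, which is convenient for the finite element implementation. To apply the abstract result I would undo this reformulation and transfer everything back to $\opmax$.

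First I would record the operator identity $\opmax_1=\mathcal{P}\opmax\mathcal{P}$ on $\mathcal{D}_1$, which is immediate from the definition $\opmax=\mathcal{P}^{-1}\opmax_1\mathcal{P}^{-1}$ together with the fact that $\mathcal{P}$ is a bounded, invertible, symmetric multiplication operator on $L^2(\Omega)^6$. From this, the two algebraic identities
\[
\opmax_1-t\mathcal{P}^2=\mathcal{P}(\opmax-tI)\mathcal{P}, \qquad \mathcal{P}^{-1}\opmax_1-t\mathcal{P}=(\opmax-tI)\mathcal{P}
\]
follow at once. Setting $\tilde{u}=\mathcal{P}u$ and $\tilde{v}=\mathcal{P}v$ and transferring one factor of $\mathcal{P}$ across the $L^2$-pairing via its symmetry, substitution into the definitions of $\mathfrak{m}^1_t$ and $\mathfrak{m}^2_t$ yields
\[
\mathfrak{m}^1_t(u,v)=\langle(\opmax-tI)\tilde{u},\tilde{v}\rangle, \quad \mathfrak{m}^2_t(u,v)=\langle(\opmax-tI)\tilde{u},(\opmax-tI)\tilde{v}\rangle.
\]
Since $u\mapsto\mathcal{P}u$ is a linear isomorphism from $\mathcal{L}$ onto $\tilde{\mathcal{L}}:=\mathcal{P}\mathcal{L}\subset\mathcal{P}\mathcal{D}_1=\dom(\opmax)$, problem~\eqref{weak} is equivalent to the standard Zimmermann--Mertins pencil for $\opmax$ on $\tilde{\mathcal{L}}$, with identical eigenvalues, signs, and multiplicities.

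With this reformulation in place, \cite[Theorem~2.4]{ZM95} applies verbatim: its hypotheses reduce to $\opmax$ being self-adjoint in $L^2(\Omega)^6$, $t\in\R$ being arbitrary, and the trial subspace sitting inside $\dom(\opmax)$, all of which are clear. Its conclusion then reads precisely as the two chains of inequalities in the statement of Theorem~\ref{bounds}. I expect the only delicate point in this plan to be bookkeeping: matching the sign convention and the enumeration of $\tau^\pm_j(t)$, $\rho^\pm_j(t,h)$, and $\nu^\pm_j(t)$ used here with those of \cite{ZM95}, and verifying that $\mathcal{L}\subset\mathcal{D}_1$ holds in the strong sense required. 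The latter is ensured by \eqref{fe-space}, since the $C^0$ Lagrange elements have piecewise polynomial curls in $L^2$ and the tangential condition built into $\mathbf{V}_{h,0}^r$ together with Green's identity places the electric component in $\sobol_0(\curl;\Omega)$.
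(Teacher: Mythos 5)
Your proposal is correct and follows essentially the same route as the paper, which offers no proof beyond declaring the result a direct consequence of \cite[Theorem~2.4]{ZM95} (or \cite[Corollary~7]{theoretical}); your conjugation $\opmax_1=\mathcal{P}\opmax\mathcal{P}$, the resulting identities for $\mathfrak{m}^1_t$ and $\mathfrak{m}^2_t$, and the transfer to the trial space $\mathcal{P}\mathcal{L}\subset\dom(\opmax)$ simply make explicit the reduction the authors leave implicit. The bookkeeping of signs and indices you flag is indeed the only remaining check, and it matches the conventions of the cited results.
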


\begin{remark}   \label{2d}
In the case of the lower-dimensional Maxwell operator $\tilde{\opmax_1}$,  the finite element spaces on a
corresponding triangulation $\mathcal{T}_h$ of $\tilde{\Omega}$ are chosen as
\[
\L_h=\left\{\begin{bmatrix}\bu_h \\ v_h \end{bmatrix} \in C^0\left(\overline{\tilde{\Omega}}\right)^{2+1}: \left.\begin{bmatrix}\bu_h \\ v_h \end{bmatrix} \right|_K \in
\mathbb{P}_r(K)^{2+1} \  \forall K\in \mathcal{T}_h \text{ and }  \bu_h\cdot \btau =0 
\text{ on } \partial\tilde{\Omega} \right\}. 
\]
The weak problem analogous to \eqref{weak} and a corresponding version of Theorem~\ref{bounds} (and further statements below) are formulated by substituting $\mathfrak{m}^p_t$ with the corresponding lower-dimensional forms.
\end{remark}

\subsection{Convergence of the eigenvalue bounds}    \label{conv_evb}
According to \cite[Theorem~12]{theoretical}, if $\L$ captures an eigenspace of $\opmax$
within a certain order of precision $\mathcal{O}(\varepsilon)$ for small $\varepsilon$, then
the eigenvalue bounds found in Theorem~\ref{bounds} are within $\mathcal{O}(\varepsilon^2)$. 
We now show a consequence of this statement in the present setting. 

Consider  an open bounded segment  $J\subset\R$, such that $0\not\in J$. Denote by $\E_J$ the eigenspace associated to this segment and assume that $t\in J$. Here and elsewhere the relevant set where the indices $j$ move is
\[
    \mathcal{F}_J^{\pm}(t)=\{j\in \mathbb{N}:\nu_j^\pm(t)\in J\}.
\]

\begin{theorem} \label{order_maxwell}
Let $r\in \mathbb{N}$ be fixed. Then 
\[
 \lim_{h\to 0}     \left| \rho_j^\pm (t,h)-\nu_j^\pm(t)\right|= 0 \qquad \forall j\in \mathcal{F}_J^{\pm}(t).
\]
If in addition $\mathcal{P}^{-1}\E_J \subseteq \mathcal{H}^{r+1}(\Omega)^6$,  then there exist $C_t^\pm\equiv C_t^\pm(r) >0$ such that 
\begin{equation}     \label{order_conv_evalue}
      \left| \rho_j^\pm (t,h)-\nu_j^\pm(t) \right|\leq
C_t^\pm h^{2r}  \qquad \forall j\in \mathcal{F}_J^{\pm}(t)
\end{equation}
for $h$ sufficiently small.
\end{theorem}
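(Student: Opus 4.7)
The plan is to apply \cite[Theorem~12]{theoretical} directly: that abstract result translates the eigenvalue error on the left-hand side of \eqref{order_conv_evalue} into the squared distance from the trial space $\L_h$ to the relevant eigenspace, measured in the graph-type norm naturally induced by $\opmax_1$ and $\mathcal{P}$. Concretely, I expect to end up with an estimate of the form
\[
|\rho_j^\pm(t,h)-\nu_j^\pm(t)|\ \leq\ C\Bigl(\sup_{\substack{\mathbf{w}\in\E_J\\ \|\mathbf{w}\|=1}}\inf_{\mathbf{v}_h\in\L_h}\|\mathcal{P}^{-1}\mathbf{w}-\mathbf{v}_h\|_{\curl,\Omega}\Bigr)^2
\]
for every $j\in \mathcal{F}_J^\pm(t)$ and all $h$ sufficiently small. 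Since $\mathcal{P}$ and $\mathcal{P}^{-1}$ are bounded on $L^2(\Omega)^6$ by \eqref{bdd_away_from_0}, and $\opmax_1$ acts on $\mathcal{D}_1$ as a first-order operator in $\curl$, the graph-type norm coming out of Theorem~12 is controlled by $\|\cdot\|_{\curl,\Omega}$ on $\mathcal{P}^{-1}\E_J$, so the right-hand side above is the correct quantity to estimate.

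For the convergence assertion, I would exploit that, by the discreteness hypothesis placed on $\opmax$ in Section~\ref{Assumptions}, $\E_J$ is finite-dimensional. Fix a basis $\{\mathbf{w}_k\}\subset\E_J$ and set $\mathbf{u}_k:=\mathcal{P}^{-1}\mathbf{w}_k\in\mathcal{D}_1$. Combining \eqref{closure} and the analogous density of $C^\infty(\overline{\Omega})^3$ in $\sobol(\curl;\Omega)$ with standard Lagrange approximation on the shape-regular family $\{\mathcal{T}_h\}_{h>0}$ yields $\inf_{\mathbf{v}_h\in\L_h}\|\mathbf{u}_k-\mathbf{v}_h\|_{\curl,\Omega}\to 0$ for each $k$. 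Finite-dimensionality of $\E_J$ upgrades this to uniform convergence on the unit ball, so the right-hand side above tends to $0$ and the first claim follows.

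For the rate, I would fix $\mathbf{w}\in\E_J$, set $\mathbf{u}:=\mathcal{P}^{-1}\mathbf{w}$, and use the additional hypothesis $\mathbf{u}\in \mathcal{H}^{r+1}(\Omega)^6$. Continuity of $\mathbf{u}$ up to $\overline\Omega$ makes the nodal Lagrange interpolant $I_h\mathbf{u}\in\mathbf{V}_h^r\times\mathbf{V}_h^r$ well defined; each face of $\bomega$ being planar, the constraint $\bE\times\nn=0$ decouples face by face into linear conditions on boundary nodal values, which are preserved by nodal interpolation, so $I_h\mathbf{u}\in\L_h$. The classical Bramble--Hilbert estimate
\[
\|\mathbf{u}-I_h\mathbf{u}\|_{\curl,\Omega}\ \leq\ \|\mathbf{u}-I_h\mathbf{u}\|_{1,\Omega}\ \leq\ Ch^r\|\mathbf{u}\|_{r+1,\Omega}
\]
then gives an $\mathcal{O}(h^r)$ bound on the right-hand side of the displayed inequality in the first paragraph, whose square is exactly \eqref{order_conv_evalue}.

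The main obstacle I expect is not the interpolation step itself but the clean identification of the abstract graph-type norm implicit in \cite[Theorem~12]{theoretical} with the concrete $\|\cdot\|_{\curl,\Omega}$ used above, together with the uniform-in-$h$ control of the constant $C$ on the whole window $J$; this is a bookkeeping issue but requires care. A secondary delicate point is verifying that the nodal interpolant really lies in $\mathbf{V}_{h,0}^r$ on a non-Lipschitz polyhedron, but because the tangential constraint is a local linear relation at each boundary vertex and each face is planar, the argument should go through without any special treatment of the non-Lipschitz regime allowed in Section~\ref{Assumptions}.
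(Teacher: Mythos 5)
Your proposal is correct and follows essentially the same route as the paper: reduce via \cite[Theorem~12]{theoretical} to approximating $\mathcal{P}^{-1}\E_J$ by $\L_h$ in the $\curl$-norm, use density (the paper cites \cite[Theorem~3.26]{Monk2003} together with \eqref{closure}) for the limit statement, and use the nodal Lagrange interpolant with the $\mathcal{H}^{r+1}$ hypothesis to get the $\mathcal{O}(h^{r})$ subspace distance, whose square yields \eqref{order_conv_evalue}. The only differences are expository: you spell out the finite-dimensionality and the preservation of the tangential boundary condition by the interpolant, which the paper leaves implicit.
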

\begin{proof}
By combining \cite[Theorem~3.26]{Monk2003} with \eqref{closure} and standard interpolation estimates (cf. \cite{EG04}), it follows that \[\text{ for any } \begin{bmatrix}\bF \\ \bG \end{bmatrix}\in \mathcal{D}_1 \text{ there exists } \begin{bmatrix}\bF_h \\ \bG_h\end{bmatrix} \in \L_h\]
such that
\begin{equation}    \label{approx_max_fe_1}
      \lim_{h\to 0}\Big(  \|\bF-\bF_h\|_{\curl,\Omega} +
\|\bG-\bG_h\|_{\curl,\Omega}\Big) = 0\,.
\end{equation}
Since $\mathcal{P}$ is a bounded operator, then for any
\[\begin{bmatrix}\tilde{\bF} \\ \tilde{\bG}\end{bmatrix}={\mathcal P}\begin{bmatrix}\bF \\ \bG\end{bmatrix}\in \dom(\opmax),\] we have \begin{equation}    \label{approx_max_fe_11}
      \lim_{h\to 0}\left(  \left\|\opmax\begin{bmatrix}
\tilde{\bF}-\tilde{\bF}_h\\
\tilde{\bG}-\tilde{\bG}_h\end{bmatrix}\right\|_{0,\Omega}
      + \left\|\begin{bmatrix} \tilde{\bF}-\tilde{\bF}_h\\
\tilde{\bG}-\tilde{\bG}_h\end{bmatrix}\right\|_{0,\Omega} \right) = 0
\end{equation}
where
\[\begin{bmatrix} \tilde{\bF}_h \\ \tilde{\bG}_h \end{bmatrix}= \mathcal{P} \begin{bmatrix} \bF_h \\ \bG_h \end{bmatrix}  \in \dom(\opmax).\]
In turns, this is exactly the hypothesis required in \cite[Theorem~12]{theoretical} which ensures the claimed statement.  

Let $\mathcal{I}_{r,h}$ denote the interpolation operator associated to the finite element spaces $\mathbf{V}^r_h$ (cf. \cite{EG04}) and let
\[
     \begin{bmatrix} {\bF}_h \\ {\bG}_h \end{bmatrix}=\mathcal{I}_{r,h} \begin{bmatrix} {\bF} \\ {\bG} \end{bmatrix}.
\]
If 
\[
   \begin{bmatrix} \bF \\ \bG \end{bmatrix}\in \mathcal{E}_J \subset \mathcal{H}^{r+1}(\Omega)^6,
\]
then
\begin{equation} \label{approx_max_fe_2}
       \left\|\begin{bmatrix} {\bF}-{\bF}_h\\
{\bG}-{\bG}_h\end{bmatrix}\right\|_{\curl,\Omega}\leq c(r)
h^{r}\left\|\begin{bmatrix}\bF \\ \bG\end{bmatrix} \right\|_{r+1,\Omega} ,
\end{equation}
so that
\begin{equation} \label{approx_max_fe_22}
     \left\| \opmax  \begin{bmatrix} \tilde{\bF}-\tilde{\bF}_h\\
\tilde{\bG}-\tilde{\bG}_h\end{bmatrix}\right\|_{0,\Omega}
       + \left\|\begin{bmatrix} \tilde{\bF}-\tilde{\bF}_h\\
\tilde{\bG}-\tilde{\bG}_h\end{bmatrix}\right\|_{0,\Omega}\leq C h^{r}
\end{equation}
where  $C>0$ is a  constant independent of $h$.
This is precisely the condition \cite[(35)]{theoretical}.
Thus, \cite[Theorem~12]{theoretical} ensures the claimed statement.
\end{proof}

 \subsection{Eigenfunctions}
The statement established in \cite[Corollary~13]{theoretical}, provides an insight on how the eigenspace $\mathcal{E}_J$  in the framework of Theorem~\ref{order_maxwell} is also captured by the trial subspaces $\L$ as $h\to 0$. Let 
\[
\dist_{1}[({\bF},{\bG}),\E] =\inf_{\begin{bmatrix}\bX \\ \bY\end{bmatrix} \in \E}
\left\|  \begin{bmatrix} {\bF}- \bX\\  {\bG}- \bY \end{bmatrix} \right\|_{\curl,\Omega}  
\]
be the Hausdorff distance between a given vector 
\[
\begin{bmatrix}\bF \\ \bG\end{bmatrix}\in \mathcal{D}_1 \qquad \text{ and } \qquad \mathcal{E}\subseteq \mathcal{D}_1.
\]
Denote by  
\[
    \begin{bmatrix}\bF^\pm_{j}(t,h) \\ {\bG}^\pm_{j}(t,h)\end{bmatrix}\in {\L}_h
\] 
the eigenvectors of \eqref{weak} associated to $\tau_j^\pm(t)$ respectively and
assume that 
\[
\left\|\begin{bmatrix}\bF^\pm_{j}(t,h) \\ {\bG}^\pm_{j}(t,h)\end{bmatrix}\right\|_{0,\Omega}=1.
\]
Then, the following result concerning approximation of eigenspaces can be stated.

\begin{theorem} \label{order_maxwell_eigenfunction} 
Let $r\in \mathbb{N}$ be fixed. Then, 
 \[
 \lim_{h\to 0} \dist_{1} [({\bF}^\pm_{j}(t,h) ,
{\bG}^\pm_{j}(t,h)),\E_{J}] = 0.
  \]  
If in addition $\mathcal{P}^{-1}\E_J \subseteq \mathcal{H}^{r+1}(\Omega)^6$,  then there exist 
$C_t^\pm(r) >0$ such that 
\[
 \dist_{1} [({\bF}^\pm_{j}(t,h) ,
{\bG}^\pm_{j}(t,h)),\E_{J}] \leq C^{\pm}_t(r) h^r
\]
for $h$ sufficiently small.
\end{theorem}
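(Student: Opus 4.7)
The plan is to mirror the two-step structure of the proof of Theorem~\ref{order_maxwell}, replacing the abstract reference \cite[Theorem~12]{theoretical} with \cite[Corollary~13]{theoretical}. The latter, applied to the self-adjoint operator $\opmax$ and to the family of Galerkin subspaces $\mathcal{P}\L_h\subset \dom(\opmax)$, converts an approximation property of the eigenspace $\E_J$ into a Hausdorff-type estimate on the eigenvectors produced by \eqref{weak}. Thus the task reduces to verifying exactly the same density/approximation hypotheses that were already verified inside the proof of Theorem~\ref{order_maxwell}, but now carrying the information through the eigenvector conclusion of the corollary rather than through its eigenvalue conclusion.

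For the qualitative statement, I would simply invoke \eqref{approx_max_fe_1} and the resulting \eqref{approx_max_fe_11}, which say that for every $(\tilde{\bF},\tilde{\bG})\in\dom(\opmax)$ there exists $(\tilde{\bF}_h,\tilde{\bG}_h)=\mathcal{P}(\bF_h,\bG_h)\in\mathcal{P}\L_h$ converging to it in the graph norm of $\opmax$. This is precisely the density condition under which \cite[Corollary~13]{theoretical} guarantees that the Hausdorff distance from the computed, $L^2$-normalized eigenvectors of \eqref{weak} to the corresponding eigenspace tends to zero; since the graph norm of $\opmax$ is equivalent to $\|\cdot\|_{\curl,\Omega}$ after applying $\mathcal{P}^{\pm 1}$, this translates directly into the claim with $\dist_1$.

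For the quantitative statement, I would choose the standard Lagrange interpolant $(\bF_h,\bG_h)=\mathcal{I}_{r,h}(\bF,\bG)$ for each $(\bF,\bG)\in\mathcal{P}^{-1}\E_J$. The regularity hypothesis $\mathcal{P}^{-1}\E_J\subseteq\mathcal{H}^{r+1}(\Omega)^6$ then yields \eqref{approx_max_fe_2}, and hence \eqref{approx_max_fe_22}, uniformly on the finite-dimensional subspace $\mathcal{P}^{-1}\E_J$. This is exactly the $O(h^r)$ approximation bound required by \cite[Corollary~13]{theoretical}, from which the stated $\dist_1[(\bF^\pm_j(t,h),\bG^\pm_j(t,h)),\E_J]\leq C^\pm_t(r)h^r$ follows for $h$ small enough (so that the spectral projectors are stable and the indexing of eigenvalues in $J$ has settled, as already used in Theorem~\ref{order_maxwell}).

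The main obstacle is not analytical but bookkeeping: one must be careful that the normalization of $(\bF^\pm_j,\bG^\pm_j)$, the twist by $\mathcal{P}$ between $\L_h$ and $\mathcal{P}\L_h\subset\dom(\opmax)$, and the distinction between the graph norm of $\opmax$ used in \cite{theoretical} and the curl norm used in $\dist_1$ all line up correctly when transcribing the hypotheses and conclusions of \cite[Corollary~13]{theoretical} into the present setting. Because $\mathcal{P}$ is bounded with bounded inverse on $L^2(\Omega)^6$ by \eqref{bdd_away_from_0}, all these identifications are bi-Lipschitz and the translation between frameworks is routine, so no new estimate beyond those already established for Theorem~\ref{order_maxwell} is required.
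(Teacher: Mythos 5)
Your proposal is correct and follows essentially the same route as the paper: the authors' proof is literally ``proceed as in the proof of Theorem~\ref{order_maxwell} in order to verify the hypotheses of [Corollary~13 of the reference]'', which is exactly the verification of \eqref{approx_max_fe_1}--\eqref{approx_max_fe_11} and \eqref{approx_max_fe_2}--\eqref{approx_max_fe_22} that you carry out before invoking the eigenvector conclusion instead of the eigenvalue one. Your additional remarks on the $\mathcal{P}$-twist and the equivalence of the graph and curl norms are sound bookkeeping that the paper leaves implicit.
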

\begin{proof}
Proceed as in the proof of Theorem~\ref{order_maxwell} in order to verify the hypotheses of \cite[Corollary~13]{theoretical}.
\end{proof}

Convergence of the eigenvalue bounds in Theorem~\ref{bounds} are therefore ensured, in spite of the fact that $\L$ are spaces of nodal finite elements with no particular mesh structure.
Note that this is guaranteed, even in the case where $\epsilon$ and $\mu$ are rough, however, since $\mathcal{P}^{-1}\E_J \not\subseteq \mathcal{H}^{r+1}(\Omega)^6$ unless these coefficients are smooth themselves, an estimate on the convergence rate in this situation is beyond the scope of Theorem~\ref{order_maxwell_eigenfunction}. For a highly heterogeneous medium, a deterioration of the convergence speed is to be expected.

We remark  that the above analysis relies on the regularity of the eigenspaces associated to the interval $J$ only. Thus, for non-convex $\Omega$, 
this allows the possibility of approximating eigenvalues associated to regular eigenfunctions with high
accuracy, if some a priori information about their location is at hand.


\section{A certified numerical strategy}   \label{numstratnut}

Let us now describe a procedure which, in an asymptotic regime,  
renders small intervals which are guaranteed to contain spectral points.  Convergence will be derived from Theorem~\ref{order_maxwell}.

Denote by $0<t_{\up}<t_{\low}$  the corresponding parameters $t$ 
in the weak problem \eqref{weak}, which are set for
computing $\rho^-_j(t_{\low},h)$ (lower bounds) and $\rho^+_j(t_{\up},h)$ (upper bounds) in the segment $(t_{\up},t_{\low})$.  The  scheme described next aims at finding intervals of enclosure for the eigenvalues of $\opmax$ which lie in this segment, for a prescribed tolerance set by the parameter $\delta>0$. According to Lemma~\ref{lem_certified} below, these intervals will be certified in the regime $\delta\to 0$.

\begin{algorithm} \label{alg3}
 \
 
 \begin{itemize}
 \item[] \underline{Input}. 
 \begin{itemize}
 \item Initial $t_{\up}>0$.
 \item Initial $t_{\low}>t_{\up}$ such that $t_{\low}-t_{\up}$ is fairly large.  
 \item A sub-family $\mathcal{F}$ of finite element spaces ${\mathcal{L}}_h$ as in
\eqref{fe-space}, dense as $h\to 0$. 
 \item A tolerance $\delta>0$ fairly small compared with $t_{\low}-t_{\up}$.
 \end{itemize}
\item[] \underline{Output}. 
\begin{itemize}
\item A prediction $\tilde{m}(\delta)\in \mathbb{N}$  of $\tr \1_{(t_{\up},t_{\low})}(\opmax)$.
 \item Predictions $\omega_{j,\delta}^\pm$ of the endpoints of enclosures for the eigenvalues in  $\spec(\opmax)\cap (t_{\up},t_{\low})$, such that
  $0<\omega_{j,\delta}^+- \omega_{j,\delta}^-< \delta$ for $j=1,\ldots,\tilde{m}(\delta)$.
\end{itemize}
\item[] \underline{Steps}. 
\begin{enumerate}
\item Set initial ${\mathcal{L}}_h\in \mathcal{F}$.
\item \label{alg3a} While 
\[
\rho^+_{j,h} -\rho^-_{j,h} \geq \delta  \text{ or } \rho_{j,h}^->\rho_{j,h}^+ \text{ for some } j=1,\ldots,\tilde{m},
\]
 do \ref{alg3b} - \ref{alg3d}.  
\item \label{alg3b} Compute
\[
      \rho_{j,h}^+=\rho_j^+(t_{\up},h) \qquad \text{for} \qquad
j=1,\ldots,\tilde{m}_{\up}
\]
where $\tilde{m}_{\up}$ is such that  $\rho_{\tilde{m}_{\up},h}^+<t_{\low}$ and
\[\rho_{\tilde{m}_{\up}+1}^+(t_{\up},h)\geq t_{\low}.\] 
\item \label{alg3c}  Compute
\[
      \rho_{\tilde{m}_{\low}-k+1,h}^-=\rho_{k}^-(t_{\low},h) \qquad \text{for}
\qquad k=1,\ldots,\tilde{m}_{\low}
\]
where $\tilde{m}_{\low}$ is such that $\rho_{\tilde{m}_{\low},h}^->t_{\up}$ and
\[
\rho_{\tilde{m}_{\low}+1}^-(t_{\low},h)\leq t_{\up}.
\] 
\item \label{alg3d} If $\tilde{m}_{\low}\not=\tilde{m}_{\up}$,  decrease $h$, set new $\mathcal{L}_h\in \mathcal{F}$ and
go back to \ref{alg3b}. Otherwise set $\tilde{m}=\tilde{m}_{\low}=\tilde{m}_{\up}$, decrease $h$, set new $\mathcal{L}_h\in \mathcal{F}$ and continue from \ref{alg3a}.
\item Exit with $\tilde{m}(\delta)=\tilde{m}$ and  $\omega_{j,\delta}^\pm=\rho_{j,h}^\pm$
for $j=1,\ldots,\tilde{m}$.
\end{enumerate}
\end{itemize}
\end{algorithm}

Assume that 
\[
     (t_{\up},t_{\low})\cap \spec(\opmax)=\{\omega_{k+1},\ldots,\omega_{k+m}\}
\]
where 
\[
     m=\tr \1_{(t_{\up},t_{\low})}(\opmax)>0 \qquad \text{  and } \qquad k\geq 0.
\]
\textit{A priori}, an interval $(\omega_{j,\delta}^-,\omega_{j,\delta}^+)$ obtained as the output of 
Procedure~\ref{alg3} is not guaranteed to have a non-empty intersection with the spectrum of $\opmax$ or in fact include precisely the eigenvalue $\omega_{k+j}$.   However, as it is established by the following lemma,  the latter is certainly true for $\delta$ small enough.

\begin{lemma}   \label{lem_certified}
There exist $t^0>0$ and $\delta_0>0$, 
ensuring 
all the next items for all $t_{\low}\geq t^0$ and $\delta<\delta_0$.
\begin{enumerate}
\item The conditional loop in Procedure~\ref{alg3} always exits in the regime $h\to 0$.
\item $m(\delta)=m$.
\item $\omega^{-}_{j,\delta}\leq \omega_{k+j} \leq \omega_{j,\delta}^+$ for all $j=1,\ldots, m$.
\end{enumerate}
\end{lemma}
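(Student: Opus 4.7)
The plan is to reduce the three assertions to the asymptotic convergence of the computed bounds furnished by Theorem~\ref{order_maxwell}, combined with the one-sided inequality of Theorem~\ref{bounds}. Since the non-zero spectrum of $\opmax$ is discrete by the standing assumption in Section~\ref{Assumptions}, I would first select $t^0$ and $\delta_0$ so that, for every $t_{\low}\geq t^0$: neither $t_{\up}$ nor $t_{\low}$ lies in $\spec(\opmax)$, the open interval $(t_{\up},t_{\low})$ contains at least one eigenvalue (so that $m\geq 1$), and $\delta_0$ is strictly smaller than $\omega_{k+1}-t_{\up}$, than $t_{\low}-\omega_{k+m}$, and than each consecutive gap $\omega_{k+j+1}-\omega_{k+j}$ with $1\leq j<m$ (the latter condition being vacuous if $m=1$).

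For item~\emph{(a)}, I would invoke Theorem~\ref{order_maxwell} on an open segment $J\subset(0,\infty)$ slightly larger than $[t_{\up},t_{\low}]$, once with $t=t_{\up}$ and once with $t=t_{\low}$. This yields, as $h\to 0$,
\[
\rho_j^+(t_{\up},h)\longrightarrow \nu_j^+(t_{\up})=\omega_{k+j} \quad\text{and}\quad \rho_j^-(t_{\low},h)\longrightarrow \nu_j^-(t_{\low})=\omega_{k+m-j+1}
\]
for every $j=1,\ldots,m$, together with $\rho_{m+1}^+(t_{\up},h)\to\omega_{k+m+1}\geq t_{\low}$ and $\rho_{m+1}^-(t_{\low},h)\to\omega_k\leq t_{\up}$ whenever these adjacent eigenvalues exist. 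Because $t_{\up},t_{\low}\notin\spec(\opmax)$, it follows that $\tilde m_{\up}=\tilde m_{\low}=m$ for all sufficiently small $h$, so step~\ref{alg3d} is triggered and sets $\tilde m=m$. After the re-indexing performed in step~\ref{alg3c}, both $\rho^+_{j,h}$ and $\rho^-_{j,h}$ converge to the common limit $\omega_{k+j}$; their difference is non-negative by Theorem~\ref{bounds} and eventually strictly smaller than $\delta$, whence the while condition in step~\ref{alg3a} fails and the loop exits.

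Item~\emph{(b)} is then immediate from $\tilde m(\delta)=m$. For item~\emph{(c)}, Theorem~\ref{bounds} yields the bracketing $\omega_{k+j}=\nu_j^+(t_{\up})\leq\rho_j^+(t_{\up},h)=\omega^+_{j,\delta}$ and, via the re-indexing in step~\ref{alg3c}, $\omega_{k+j}=\nu_{m-j+1}^-(t_{\low})\geq\rho^-_{m-j+1}(t_{\low},h)=\omega^-_{j,\delta}$. The main technical hurdle is to guarantee that the two counters $\tilde m_{\up}$ and $\tilde m_{\low}$ stabilize at the correct common value $m$ once $h$ is small enough; this rests crucially on the strict separation of $t_{\up}$ and $t_{\low}$ from the spectrum (so that no surplus bound can straddle the boundary of the interval in the limit) and on the density of the finite element family, which ensures that Theorem~\ref{order_maxwell} applies simultaneously to all indices in $\mathcal{F}_J^\pm(t_{\up})$ and $\mathcal{F}_J^\pm(t_{\low})$ needed above.
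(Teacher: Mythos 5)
Your argument follows the same route as the paper's own proof: it combines the convergence statement of Theorem~\ref{order_maxwell} (applied at $t_{\up}$ and at $t_{\low}$ through the identification $\nu_j^+(t_{\up})=\omega_{k+j}=\nu_{m-j+1}^-(t_{\low})$) with the one-sided inequalities of Theorem~\ref{bounds}, and it is correct --- indeed somewhat more explicit than the published proof about why the counters $\tilde m_{\up}$ and $\tilde m_{\low}$ stabilize at $m$. The only cosmetic slip is the requirement that no $t_{\low}\geq t^0$ lie in $\spec(\opmax)$, which cannot hold simultaneously for all such $t_{\low}$ since the spectrum is unbounded above; this is inessential because the strict inequalities in the definition of $\nu_j^\pm(t)$ already cover that edge case.
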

\begin{proof}
Set $t^0>\omega_1^+(t_{\up})$ sufficiently large, ensuring
$
 m \neq 0
$
for any $t_{\low}\geq t^0$. Since $\nu_j^+(t_{\up})=\omega_{k+j}=\nu^-_{m-j+1}(t_{\low})$ for all 
$j=1,\ldots,m$, Theorem~\ref{order_maxwell} alongside with the assumption on 
$\mathcal{F}$, ensures the existence of  $\rho^{\pm}_{j,h}$  in 
Procedure~\ref{alg3}-\ref{alg3b} and \ref{alg3c}, for all $j=1,\ldots, m$ 
whenever $h$ is small enough. Moreover
\[
        \rho_{j,h}^+\downarrow \omega_{k+j} \qquad \text{and} \qquad \rho^-_{j,h}\uparrow \omega_{k+j}
        \qquad \text{as } h\to 0
\]
as needed.
\end{proof}

If the eigenfunctions of $\opmax$ lie in $\mathcal{H}^{r+1}(\Omega)^6$,  then 
 \[
     \rho_{j,h}^+ - \rho_{j,h}^- =O(h^{2r}).
\]
This means that the exit rate of the  conditional loop in Procedure~\ref{alg3} is
also $O(h^{2r})$ as $h\to 0$. 

Observe that in the above procedure, a good choice of $t_{\up}$ and $t_{\low}$ has a noticeable impact in performance. See Section~\ref{non-lip}. The results of the recent manuscript \cite{BoHo2013}, suggest\footnote{An upper bound is provided in \cite[Corollary~11]{theoretical}
and the value $-1$ seems to be the right exponent.} that the constants involved in the 
estimates of Theorem~\ref{order_maxwell} are 
of order $|t-\nu^\pm_1(t)|^{-1}$.  Table~\ref{tab_slit_square} strongly 
suggest that the accuracy improves significantly, as $t_{\up}\downarrow 
\nu_1^-(t_{\up})$ and $t_{\low}\uparrow \nu_1^+(t_{\low})$.

In the subsequent sections we proceed to illustrate the practical applicability 
of the ideas discussed above by means of several examples. 
Two canonical references for benchmarks on the Maxwell 
eigenvalue problem are \cite{2004Dauge} and \cite{BFGP1999}. We validate some of our
numerical bounds against these benchmarks. Everywhere below we will write $\omega_j^\pm\equiv \omega_{j,\delta}^\pm$ (see  Procedure \ref{alg3}) where $\delta$ might or might not be specified. In the latter case, we have taken its value small enough to ensure the reported accuracy.
We consider constant $\epsilon=\mu=1$ in sections~\ref{convex-domains} and \ref{non-con}, and
 $\epsilon\not=1$ with jumps in Section~\ref{transmission}.


\section{Convex domains}\label{convex-domains}

The eigenfunctions of \eqref{maxwell} and \eqref{maxwell_2d} are regular in the 
interior of a convex domain, see~\cite{Monk2003,ABDG98}. In this, the best possible case scenario, the method of sections~\ref{feceb} and \ref{numstratnut} achieves an optimal order of convergence for finite elements. 

 Without further mention, the following convention will be in place here and everywhere below. The index $k$ for eigenvalues and eigenvalue bounds will be used, whenever multiplicities are not counted. Otherwise the index $j$ (as in previous sections) will be used.

\subsection{The square}
\label{accuracy}

\begin{figure}[t]
\centerline{
\includegraphics[height=8cm, angle=0]{\dir 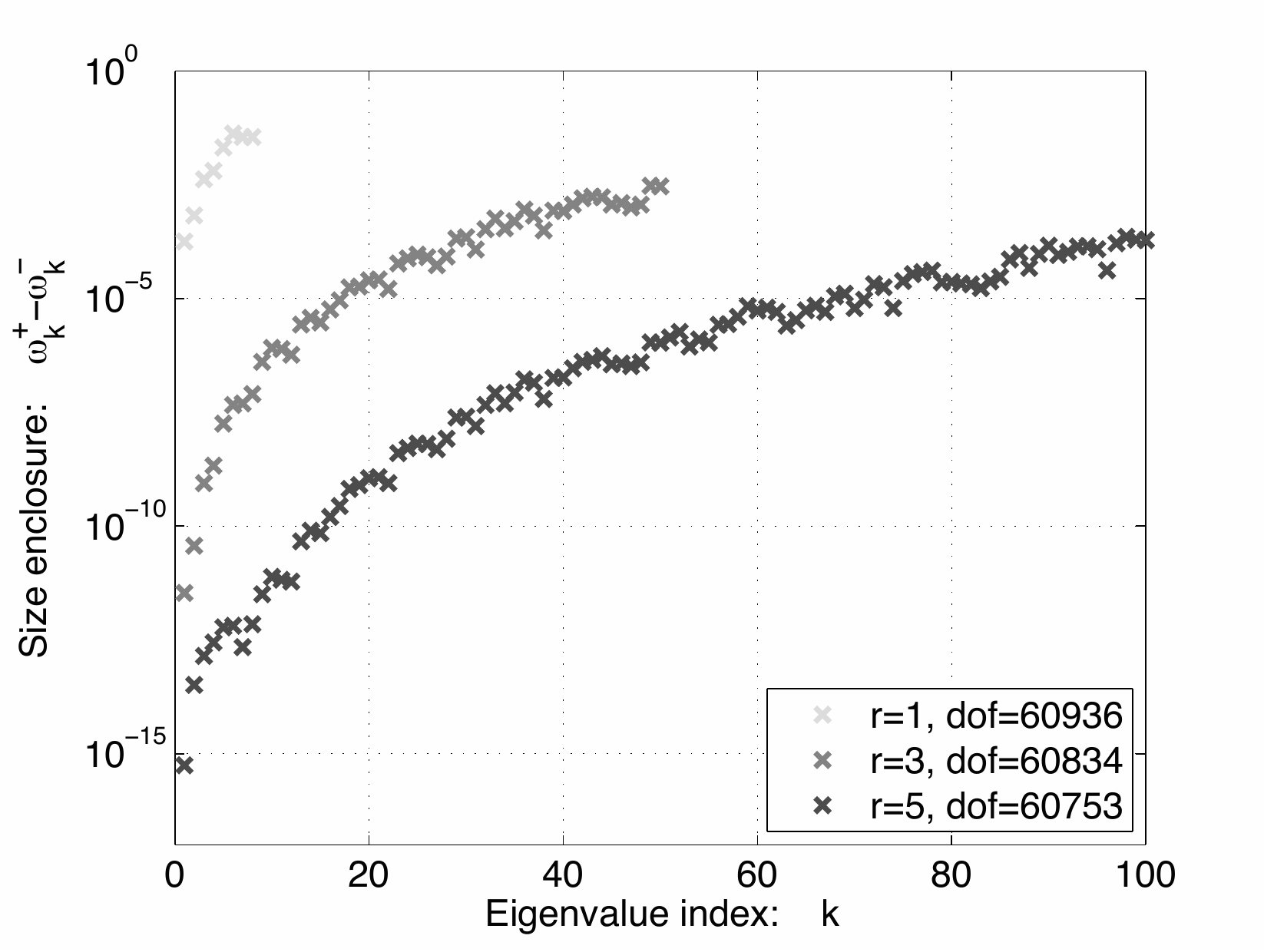}}
\caption{Semi-log graph associated to $\tilde{\Omega}_\sqr$. Vertical axis: 
$\omega^+_k -\omega^-_k$. Horizontal axis: eigenvalue index $k$ (not counting
multiplicity). Here we consider elements of order $r=1,3,5$ on unstructured uniform
meshes rendering roughly the same number of degrees of freedom. For each $r$, 
we have used exactly the same trial subspace $\mathcal{L}_h$ for all the eigenvalues. 
\label{many_eigenvalues}
}
\end{figure}

Let
$
 \tilde{\Omega}\equiv \tilde{\Omega}_\sqr=(0,\pi)^2\subset \RR^2.
$
The eigenvalues
of $\tilde{\opmax}$ are $\omega=\pm \sqrt{l^2+m^2}$ for $l,m\in \N\cup\{0\}$.  Pick 
\[
    t_{\up}=\frac{1}{4}\omega_{k-1} + \frac{3}{4} \omega_k \qquad \text{and} 
    \qquad t_{\low}=\frac{3}{4}\omega_{k} + \frac{1}{4} \omega_{k+1}
\]
to machine precision. In our first experiment we have computed enclosure widths
$\omega_k^+-\omega_k^-$ for $k=1,\ldots,100$
and $r=1,3,5$.  We have chosen $h=h(r)$ such that the corresponding trial subspaces have
roughly the same dimension $\approx61$K. We have then found all the eigenvalue bounds for a fixed $r$, from exactly the same
trial subspace. Figure~\ref{many_eigenvalues} shows the outcomes of this
experiment. In the graph, we have excluded enclosures with size above $10^{-1}$.

As it is natural to expect, for a fixed $\L_h$, the accuracy
deteriorates as the eigenvalue counting number increases: high energy
eigenfunctions have more oscillations, so their approximation requires a higher number of degrees of freedom.  The accuracy increases with the polynomial order.  
The first 100 eigenvalues are approximated fairly accurately (note
that $\omega_{(k=100)}=\sqrt{261}$ with polynomial order $r=5$).

\subsection{The slashed cube}
Let
$
   \Omega\equiv \Omega_{\sla}=(0,\pi)^3 \setminus T \subset \mathbb{R}^3,
$
where $T$ is the closed tetrahedron with vertices $(0,0,0),(\pi/2,0,0),(0,\pi/2,0)$ and $(0,0,\pi/2)$. This domain does not have
symmetries allowing a reduction into two-dimensions.

\begin{figure}[t]
\begin{minipage}{5.5cm}
\includegraphics[height=5.5cm, angle=0]{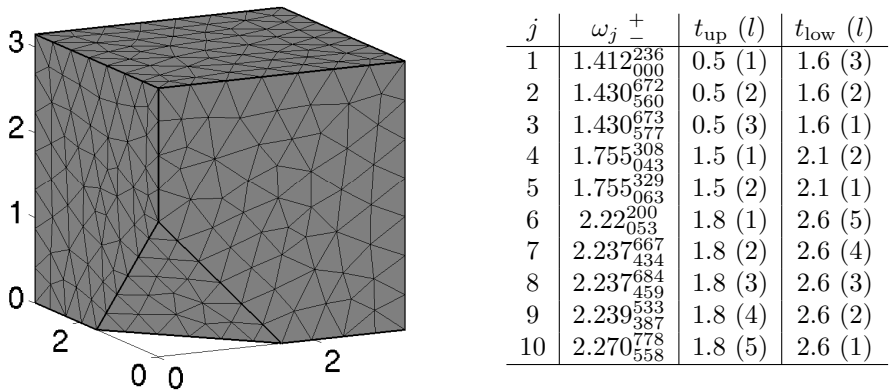}
\end{minipage} \hspace{1cm}
\begin{tabular}{c|c|c|c}
$j$ &  $\omega_j\ _-^+$   & $t_{\up}$ ($l$) & $t_{\low}$ ($l$) \\  
\hline 
$1$ & $1.412^{236}_{000}$ & $0.5$ ($1$) & $1.6$ ($3$) \\
$2$ & $1.430^{672}_{560}$ & $0.5$ ($2$) & $1.6$ ($2$) \\
$3$ & $1.430^{673}_{577}$ & $0.5$ ($3$) & $1.6$ ($1$) \\
$4$ & $1.755^{308}_{043}$ & $1.5$ ($1$) & $2.1$ ($2$) \\
$5$ & $1.755^{329}_{063}$ & $1.5$ ($2$) & $2.1$ ($1$) \\
$6$ & $2.22^{200}_{053}$ & $1.8$ ($1$)& $2.6$ ($5$) \\
$7$ & $2.237^{667}_{434}$ & $1.8$ ($2$) & $2.6$ ($4$) \\
$8$ & $2.237^{684}_{459}$ & $1.8$ ($3$) & $2.6$ ($3$) \\
$9$ & $2.239^{533}_{387}$ & $1.8$ ($4$) & $2.6$ ($2$) \\
$10$ & $2.270^{778}_{558}$ & $1.8$ ($5$) & $2.6$ ($1$) \\
\hline
\end{tabular} \hspace{1cm}
\caption{Benchmark spectral approximation for $\Omega_{\sla}$. In the table we
compute interval of enclosure for the first 10 eigenvalues, by means of an implementation of Procedure~\ref{alg3}. The
trial spaces are made of Lagrange elements of order $r=3$. The final mesh is the
one shown on the right side. Total number of DOF=117102. \label{cut_cube_table}}
\end{figure}

In our first experiment on this region, we determine benchmark eigenvalue
enclosures for \eqref{maxwell}.  The table in Figure~\ref{cut_cube_table} shows the outcomes of implementing a numerical scheme
based on Procedure~\ref{alg3}.  We have iterated our algorithm for three fixed
choices of $t_{\up}$ and $t_{\low}$ (third and fourth columns), with
$\delta=10^{-2}$.  We have picked the family of meshes so that no more than five
iterations were required to achieve the needed accuracy.  We have chosen trial spaces made out of Lagrange elements of
order $r=3$. All the final eigenvalue enclosures have a length of at most
$2\times 10^{-3}$.  The mesh used in the last iteration  is depicted on the
left of Figure~\ref{cut_cube_table}.   The parameter $l$ in
this table counts the number of eigenvalues to the right of $t_\up$ or to the
left of $t_\low$, respectively.

From the table, it is natural to conjecture that there is a cluster of eigenvalues at the
bottom of the positive spectrum near $\sqrt{2}$. The latter is the first
positive eigenvalue for $\Omega\equiv \Omega_{\cbe}=(0,\pi)^3$, which is of multiplicity 3 for that region. See \cite[Section~5.1]{theoretical}. As we deform $\Omega_{\cbe}$ into $\Omega_{\sla}$, it appears that
this eigenvalue splits into a single eigenvalue at the bottom of the spectrum
and a seemingly double eigenvalue slightly above it.  
Another cluster occurs at $\omega_4$ and $\omega_5$ with strong indication that
this is a double eigenvalue. This pair is near $\sqrt{3}$, the second eigenvalue
for $\Omega_{\cbe}$, which is indeed double.  The next eigenvalues for $\Omega_{\cbe}$ are
$2$  and $\sqrt{5}$ with total multiplicity 5. We conjecture that $\omega_j$ for $j=6,\ldots,10$ are indeed perturbations of these eigenvalues.

For our second experiment on the region $\Omega_{\sla}$, we have estimated numerically the
electromagnetic fields corresponding to index up to $j=6$.  
The purpose of the experiment is to set benchmarks
for the eigenfunctions on $\Omega_{\sla}$ and simultaneously illustrate
Theorem~\ref{order_maxwell_eigenfunction}.  
In Figure~\ref{cornerless_1} we depict the density of electric and magnetic
fields, $|\bE|$ and $|\bH|$ both re-scaled to having maximum equal to 1. We also
show arrows pointing towards the direction of these fields on $\partial
\Omega_{\sla}$.  The mesh employed for these calculations is the one shown in
Figure~\ref{cut_cube_table}.  

It is remarkable that for  both experiments on $\Omega_\sla$, 
a reasonable accuracy has been achieved even for the fairly coarse mesh depicted.


\section{Non-convex domains} \label{non-con}

The numerical approximation of the eigenfrequencies and electromagnetic fields
in the resonant cavity is known to be extremely challenging when the domain is not convex. 
The main reason for this is the fact that the
electromagnetic field might have a singularity and a low degree of regularity at
re-entrant corners. See for example the discussion after
\cite[Lemma~3.56]{Monk2003} and references therein. 

In some of the examples of this section we consider a mesh adapted to the geometry of the region.
However, we do not pursue any specialized mesh refinement strategy.  
We show below that, even in the case where there is poor approximation due to
low regularity of the eigenspace, the scheme in Procedure~\ref{alg3} provides
a stable approximation.

\begin{figure} 
\centerline{
\begin{tabular}{c|c|c|cc}
$j$ &$\omega_j$ from \cite{BFGP1999} &  $\omega_j\ _-^+$   & $t_{\up}$ ($l$) &
$t_{\low}$ ($l$) \\  
 &  (from \cite{2004Dauge}) & & & \\ 
\hline 
$1$ &$0.768192684$  & $0.773334_{694}^{991}$ & $0.1$ ($1$) & $2.1$ ($4$) \\
 & ($0.773334985176$) &&& \\
$2$ &$1.196779010$ & $1.1967827557_{026}^{761}$ & $0.1$ ($2$) & $2.1$ ($3$) \\
 & ($1.19678275574$) &&& \\
$3$ &$1.999784988$ & $_{1.99999999933}^{2.00000000064}$ & $1.5$ ($1$) & $2.5$
($4$) \\
 & ($2.00000000000$) &&& \\
$4$ &$1.999784988$ & $_{1.99999999936}^{2.00000000067}$ & $1.5$ ($2$) & $2.5$
($3$) \\
 & ($2.00000000000$) &&& \\
$5$ &$2.148306309$ & $2.14848368_{199}^{365}$ &  $1.5$ ($3$) & $3.1$ ($5$) \\
 & ($2.14848368266$) &&& \\
$6$ &$2.252760528$ & $2.25729_{776}^{896}$ & $1.5$ ($4$) & $3.1$ ($4$) \\
$7$ &$2.828075317$ & $2.8284271_{186}^{354}$ & $1.5$ ($5$) & $3.7$ ($4$) \\
$8$ &$2.938491109$ & $2.94671_{112}^{343}$ & $1.5$ ($6$) & $3.7$ ($3$) \\
$9$ &$3.075901493$ & $3.0758929_{571}^{738}$ & $1.5$ ($7$) & $3.7$ ($2$) \\
$10$ &$3.390427701$ & $3.3980_{676}^{724}$ & $1.5$ ($8$) & $3.7$ ($1$) \\
\hline
\end{tabular}  
\begin{minipage}{7.5cm}
\includegraphics[height=7cm, angle=0]{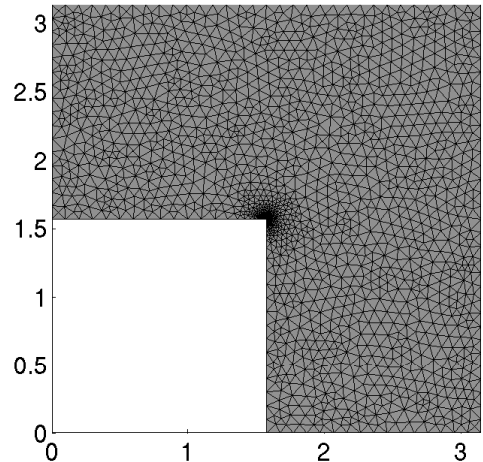}
\end{minipage}
}
\caption{Enclosures for the first 10 positive eigenvalues of $\tilde{\opmax}$ on
$\tilde{\Omega}_{\lsp}$.  The next eigenvalue is above 3.7. Here
Procedure~\ref{alg3} has been implemented on Lagrange elements of order 3. The
final mesh shown on the right has a number of DOF=56055. The mesh has a maximum
element size  $h=0.1$ and has been refined at $(\pi/2,\pi/2)\in \partial \tilde{\Omega}_{\lsp}$. 
For comparison
on the second column we include the eigenvalue estimations
found in \cite{BFGP1999} and \cite{2004Dauge}.\label{table_eigenvalues_lshape}}
\end{figure}

\subsection{A re-entrant corner in two dimensions} \label{Lshape}
The  region \[\tilde{\Omega}\equiv \tilde{\Omega}_{\lsp}=(0,\pi)^2\setminus [0,\pi/2]^2\subset \R^2\]
is a classical benchmark domain both for the Maxwell and the Helmholtz problems,
and it has been extensively examined in the past. Numerical computations for the
eigenvalues of $\tilde{\opmax}$, via an
implementation based on a mixed formulation of \eqref{maxwell_2d} and  edge finite elements, were reported in \cite[Table~5]{BFGP1999}. See also \cite{2004Dauge}. We now show estimation of sharp enclosures for these eigenvalues by means of 
the method described in sections~\ref{convex-domains} and \ref{non-con}.

\begin{figure}[t]
\centerline{\includegraphics[height=8cm, angle=0]{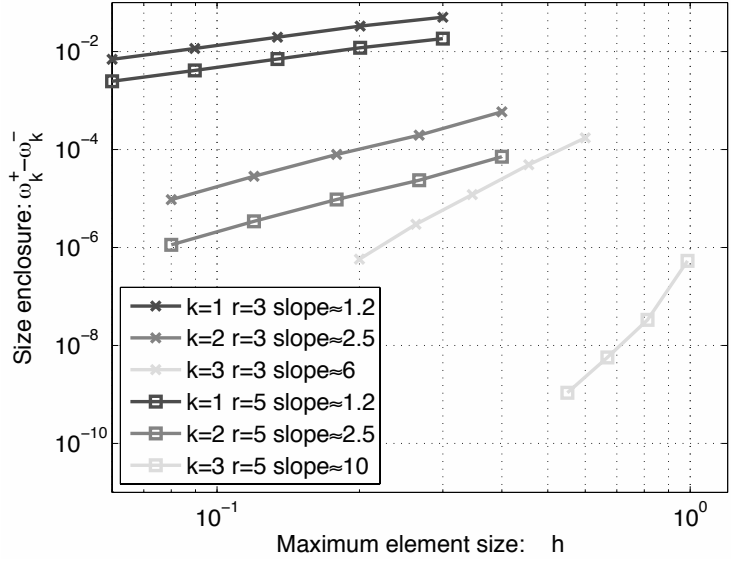}}
\caption{ 
Compared order of approximation for different eigenvalues in the region
$\tilde \Omega_{\lsp}$. The log-log plot shows residual versus maximum element size $h$
for the calculation of enclosures for $\omega_j$ where $j=1,2,3$ and
$\L$ is generated by Lagrange elements of order $r=3$ and $r=5$. Note
 that $(\bE,H)\not \in \mathcal{H}^s(\Omega_{\lsp})^3$ for $j=1$ and $s=1$, and
for $j=2$ and $s=1.5$. On the other hand, for $j=3$ we have $(\bE,H)$
smooth, as the eigenfunction is also solution of \eqref{maxwell} on a square of
side $\pi/2$.
\label{lshape_orders}
}
\end{figure}

For the next set of experiments we consider unstructured triangulations of the
domain, refined around the re-entrant corner $(\pi/2,\pi/2)\in \partial \tilde\Omega_{\lsp}$. The polynomial order is set to
$r=3$.  Figures~\ref{table_eigenvalues_lshape},  \ref{lshape_orders} and  \ref{Lshape_eigenfunctions}
summarize our findings.

We produced the table in Figure~\ref{table_eigenvalues_lshape}  by implementing
Procedure~\ref{alg3} in the same fashion as
for the case of $\Omega_\sla$ described previously.
For comparison, in the second column of this table we have included the benchmark
eigenvalue estimations
found in \cite{BFGP1999} and \cite{2004Dauge}. Note that some of the 
approximations made by means of the mixed formulation  are lower bounds of the true
eigenvalues, and some (see the row for $j=9$ in the table) are upper bounds. This confirms that the latter approach is in general un-hierarchical
as previously suggested in the literature.

From the third column of the table, it is clear that the accuracy depends on the
regularity of the corresponding eigenspaces.
The eigenfunctions  associated to $\omega=2$ and $\omega=\sqrt{8}$
are found by the translation and gluing in an appropriate fashion, of eigenfunctions in the sub-region $\tilde{\Omega}=(0,\pi/2)^2\subset \tilde{\Omega}_{\lsp}$. 
These eigenfunctions are smooth in the interior of $\tilde \Omega_\lsp$ and they achieve a maximum order of convergence. 
The eigenfunctions associated to $\omega_1$ and
$\omega_2$, on the other hand, are singular at the re-entrant corner. Moreover, the electric field component for index $j=1$ is known to be outside $\sobol^1(\Omega_{\lsp})^2$ while that for index $j=2$ is in 
$\sobol^1(\tilde \Omega_{\lsp})^2$. This explains the significant gain in accuracy in the calculation of $\omega_2$ with respect to the one for $\omega_1$.  Here the computation of the eigenvalues with smooth eigenspace ($j=3,4$ or $7$) is less accurate than that for the index $j=2$, because of the mesh chosen.

Figure~\ref{lshape_orders}  depicts
in log-log scale residuals versus maximum element size. We have considered here
Lagrange elements of order $r=3$ and $r=5$. The hierarchy of meshes (not shown)
was chosen unstructured, but with an uniform distribution of nodes.  Since the
eigenfunctions associated to $\omega_1$ and $\omega_2$ have a limited regularity, 
then there is no noticeable improvement on the convergence order as $r$ changes from $3$ to $5$.
Since the third eigenfunction is smooth, it does obey the estimate \eqref{order_conv_evalue}.

Benchmark approximated eigenfunctions are depicted in
Figure~\ref{Lshape_eigenfunctions}. The mesh employed to produce 
these graphs is the one shown on the right of Figure~\ref{table_eigenvalues_lshape}.  As some of the electric fields have a
singularity at $(\pi/2,\pi/2)\in\partial \tilde{\Omega}_{\lsp}$ we have re-scaled each individual plot to a
range in the interval $[0,1]$.

\begin{figure}
\begin{minipage}{7cm}
\includegraphics[height=7cm, angle=0]{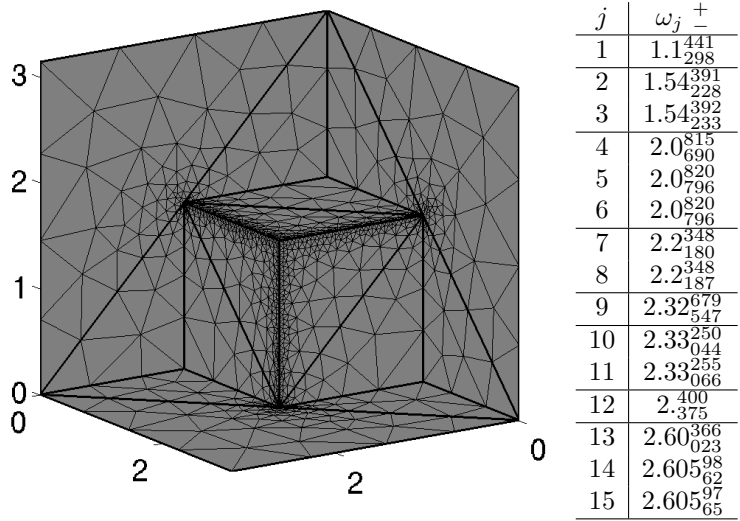}
\end{minipage} \hspace{1cm}
\begin{tabular}{c|c}
$j$  & $\omega_j\ _-^+$ \\  
\hline 
$1$  & $1.1^{441}_{298}$   \\ \hline
$2$ & $1.54^{391}_{228}$ \\
$3$  & $1.54^{392}_{233}$ \\   \hline
$4$  &  $2.0^{815}_{690}$\\
$5$  &  $2.0^{820}_{796}$  \\
$6$ & $2.0^{820}_{796}$\\   \hline
$7$  & $2.2^{348}_{180}$ \\
$8$  & $2.2^{348}_{187}$ \\   \hline
$9$ &  $2.32^{679}_{547}$\\   \hline
$10$  & $2.33^{250}_{044}$ \\
$11$  &  $2.33^{255}_{066}$\\   \hline
$12$  & $2.^{400}_{375}$\\   \hline
$13$  &  $2.60^{366}_{023}$\\
$14$  &  $2.605_{62}^{98}$\\
$15$  &  $2.605_{65}^{97}$\\
\hline
\end{tabular} \hspace{1cm}
\caption{Spectral enclosures for the Fichera domain $\Omega_{\lstd}$. Here we have fixed $t_{\up}=0.1$ and
$t_{\low}=2.8$.  The final mesh in the iteration is shown  on the left side. Its number of DOF=347460. \label{fichera_table}}
\end{figure}

\subsection{The Fichera domain}\label{Section:Fishera}
In this next experiment we consider the region
\[\Omega\equiv \Omega_{\lstd}=(0,\pi)^3 \setminus [0,\pi/2]^3 \subset \R^3.\]
See also \cite[\S5.2]{theoretical} for related results.

The table on right of Figure~\ref{fichera_table} shows numerical
estimation of the first 15 positive eigenvalues. Here we have fixed
$t_{\up}=0.1$ and $t_{\low}=2.8$. We have considered meshes refined along the re-entrant edges. The final mesh is shown on
the left side of Figure~\ref{fichera_table}.  We have
stopped the algorithm when the tolerance $\delta=0.03$ has been achieved.
However, note that  the accuracy is much higher for the indices
$j=2,3,9,10,11,13,14,15$. 

Figure~\ref{fichera} includes the corresponding approximated eigenfunctions. The
mesh employed for this calculation is the same as that of
Figure~\ref{fichera_table}.

\subsection{A non-Lipschitz domain} \label{non-lip}

As mentioned earlier, for a single trial space
$\mathcal{L}$, the accuracy of the eigenvalue bounds established in Theorem~\ref{bounds}
depends on the position of $t$ relative to adjacent components of the spectrum. In this
experiment we demonstrate that this dependence might vary significantly
with $t$. The numerical evidence below suggests that a good choice of $t_{\up}$ and
$t_{\low}$ plays a major role in the design of  efficient algorithms for eigenvalue
calculation based on this method.

\begin{figure}      
\begin{tabular}{c|c||c|c||c|c}
RF &  DOF   &  $t_{\low}=1.95$ & $t_{\low}=2.05$   &
$t_{\up}=1.05$   & $t_{\up}=0.7$    \\  
 &   &  ($l=1$  $\omega_3^-)$&   ($l=3$ $\omega_3^-)$ &
 ($l=1$ $\omega_3^+)$ &  ($l=3$  $\omega_3^+)$ \\  
\hline 
1 & 4143 &  1.24764 & 1.26640 & 1.50395 & 1.3436  \\
0.1 & 9648 & 1.25029 & 1.26830 & 1.49282 & 1.3336 \\
0.01 & 74226 & 1.25063 & 1.26846 & 1.48899 & 1.3274 \\ \hline
\end{tabular}
\caption{Dependence of the accuracy of the bounds from Theorem~\ref{bounds} on the choice of $t$ for the region $\tilde{\Omega}_{\mathrm{cut}}$. It is preferable to
pick $t_{\up}$ and $t_{\low}$ as far as possible from $\omega$, than to increase
the dimension of the trial subspace. \label{tab_slit_square}}
\end{figure}

Let $\tilde{\Omega}\equiv \tilde{\Omega}_{\mathrm{cut}}= (0,\pi)^2\setminus S$ for $S=[\pi/2,\pi]\times \{\pi/2\}$.
Benchmarks \cite{2004Dauge} on the eigenvalues of \eqref{maxwell_2d} are found by means of
solving numerically the corresponding Neumann Laplacian problem.

The first seven positive eigenvalues are
\begin{gather*}
\omega_1\approx 0.647375015,\, \omega_2=1,\, \omega_3\approx 1.280686161,   \\
\omega_4=\omega_5=2,\, \omega_6\approx 2.096486081 \quad\text{and} \quad
\omega_7\approx 2.229523505.  
\end{gather*}
The eigenfunctions associated to $\omega_2$, $\omega_4$ and $\omega_5$ are
smooth, as they are also eigenfunctions on $\tilde \Omega_{\sqr}$. On the other hand,
$\omega_1$ and $\omega_3$ correspond to singular eigenfunctions.
Standard nodal elements are completely unsuitable for the computation of these
eigenvalues, even with a significant refinement of the mesh on $S$.

The table in Figure~\ref{tab_slit_square} shows computation of $\omega_3^\pm$ on a mesh that
is increasingly refined at $S$ with a factor RF for two pairs of choices of $t_{\up}$ and $t_{\low}$. Here $h=0.1$ and we consider Lagrange elements of order $r=1$. The choice of $t_{\up}$ and $t_{\low}$ further from $\omega_3$,  even with the very coarse mesh, provides a sharper estimate of
$\omega_3^\pm$ than the other choices even with a finer mesh. 


\section{The transmission problem}\label{transmission}

In this final example, we consider a non-constant electric permittivity. Let
\begin{gather*}
\Omega_{\sqr,1} =\left(0,\frac\pi 2\right)\times \left(0,\frac\pi 2\right)  \qquad \Omega_{\rm  sqr,2}=\left(\frac\pi 2,\pi\right)\times \left(\frac\pi 2,\pi\right)
\\ \Omega_{\rm sqr,3}=\left(\frac \pi 2,\pi\right)\times\left(0,\frac \pi 2\right) \qquad
\text{and} \qquad \Omega_{\rm  sqr,4}=\left(0,\frac\pi 2\right)\times\left(\frac\pi 2,\pi\right).
\end{gather*}
so that \[\overline{\tilde \Omega_{\sqr}}=\overline{\bigcup_{l=1}^4\tilde\Omega_{\sqr,l}}.\]
Set $\mu=1$ and
\begin{equation*}
\epsilon(x)= \left\{\begin{array}{ll}
    1 & x\in \Omega_{\sqr,1} \cup \Omega_{\sqr,2} \\
    \frac12 & x\in \Omega_{\sqr,3} \cup \Omega_{\sqr,4}.
 \end{array} \right.
\end{equation*}
Numerical estimations of the eigenvalues of $\tilde\opmax$ on $\tilde{\Omega}\equiv \tilde{\Omega}_\sqr$ for this data were found in \cite{2004Dauge}. 

\begin{figure}\label{table_eigenvalues_transmission}
\centerline{
\begin{tabular}{c|c|c|lcc}
$j$ &$\omega_j$ from \cite{2004Dauge} &  $\omega_j\ _-^+$   & $l$ & up  &
low \\  
\hline
$1$ & $1.15954813181$ & $ 1.159^{555}_{456}$& & $1$ & $85$ \\
$2$ &  $1.16804100636$ & $1.16^{807}_{770}$ && $2$ & $84$ \\
$3$ &  $1.5834295853$ & $1.5834^{453}_{229}$ && $3$ &  $83$ \\
$4$ &  $2.3757369919$ & $2.375^{788}_{452}$ & &$4$ &  $82$  \\
$5$ & $2.4724291674$ & $2.472^{479}_{212}$ & &$5$ &   $81$ \\
$6$ & $ 2.5288205712$ & $2.528^{884}_{634}$ & &$6$ &  $80$ \\
$7$ &  $2.7487894882$ & $2.748^{868}_{693}$ && $7$ &  $79$ \\
$8$ &  $3.2334726763$ & $3.23^{362}_{280}$ & &$8$ &  $78$ \\
$9$ &  $3.47832176265$ & $3.47^{8478}_{775}$ &&  $9$ & $77$\\
$10$ & $ 3.51802898831$ & $3.51^{822}_{718}$ && $10$ & $76$ \\
\hline
\end{tabular}  
}
\caption{Enclosures for the first 10 positive eigenvalues of 
$\tilde\opmax$ for the transmission problem (Section~\ref{transmission}).
For comparison, on the second column we include the upper bounds
found in \cite{2004Dauge}. Here the trial subspace is made out of Lagrange elements of order 1,
$t_{\up}=10^{-9}$ and $t_{\low}=11.74$. 
The mesh employed was constructed in an unstructured fashion in the
four sub-domains $\tilde\Omega_{\sqr,l}$. The maximum element size is set to 
$h=.01$ and the total number of DOF=399720.  }
\end{figure}

We have set the experiment reported in Figure~\ref{table_eigenvalues_transmission}, on a family of meshes (not shown), which is unstructured but of equal maximum element sizes in each one of the subdomains $\tilde\Omega_{\sqr,l}$. We implemented Procedure~\ref{alg3} as discussed previously, with fix $t_{\up}=10^{-9}$ and $t_{\low}=11.74$. For comparison, in the second column of the table we have included the benchmark upper bounds from \cite{2004Dauge}. 

As we pointed out in sections~\ref{convex-domains} and \ref{non-con}, accuracy depends  on the regularity of the corresponding eigenspace. Moreover, finding conclusive lower bounds for the ninth and tenth eigenvalues turns out to be extremely expensive, if $t_{\low}\approx 3.5$. Observe that, from the reproduced values in the second column of the table, these 
two eigenvalues form a cluster of multiplicity 2. It seems that in fact
they are part of a larger cluster. The resulting narrow gap from this cluster seems to be the cause of
the dramatic deterioration in accuracy. Recall the observations made in Section~\ref{non-lip}.

The data has a natural symmetry with respect to the diagonals of $\tilde{\Omega}_{\sqr}$. 
Four types of eigenvectors arise from these symmetries, and the analytical problem reduces 
to four different eigen-problems which give rise to degenerate eigenspaces. As we are not considering a mesh 
that completely respects these symmetries, the multiplicities arising from them are not shown completely in the numerics.

In order to find reasonable bounds for $\omega_9$ and $\omega_{10}$, we had to resource to exploiting 
 the fact that $\rho^{-}_{j}(t,h)$ is locally non-increasing in $t$, and it respects ordering in $j$. An analytical proof of this property is achieved by extending to the indefinite case the results of \cite[\S3]{BoHo2013}, but in the present context we have examined them only from a numerical perspective. Note that, when $t_\low$ is near to cross an eigenvalue,  $\rho^{-}_{j}(t_{\low},h)$ jumps. These jumps appear to be small (respecting the order of the $j$) as long as the subspace captures well the eigenvectors. This effect will disappear eventually as we increase $t_\low$ further, due to the fact that $\L$ is finite-dimensional. In our experiments, we have determined that $t=t_{\low}\approx 11.74$ is near to optimal for the trial subspaces employed. Note that $t_{\low}=11.74$ gives $85$ eigenvalues in the segment $(10^{-9},11.74)$ for these trial subspaces.

%
%
%
%
%


\section*{Acknowledgements} 
We kindly thank Universit\'e de Franche-Comt{\'e}, University College London and the Isaac Newton Institute for Mathematical Sciences, for their hospitality. Funding was provided by the British-French project PHC Alliance  (22817YA), the British Engineering and Physical Sciences Research Council  (EP/I00761X/1 and \linebreak EP/G036136/1) and the French Ministry of Research (ANR-10-BLAN-0101).

\bibliographystyle{siam}

\def\cprime{$'$}

\begin{figure}
\centerline{
\includegraphics[height=5cm, angle=0]{\dir E_1_slit} \hspace{-5mm}
\includegraphics[height=5cm, angle=0]{\dir E_2_slit} \hspace{-5mm}
\includegraphics[height=5cm, angle=0]{\dir E_3_slit} 
}
\centerline{
\includegraphics[height=5cm, angle=0]{\dir E_4_slit} \hspace{-5mm}
\includegraphics[height=5cm, angle=0]{\dir E_5_slit} \hspace{-5mm}
\includegraphics[height=5cm, angle=0]{\dir E_6_slit} 
}
\rule[-0.1cm]{5cm}{0.01cm}
\centerline{
\includegraphics[height=5cm, angle=0]{\dir H_1_slit} \hspace{-5mm}
\includegraphics[height=5cm, angle=0]{\dir H_2_slit} \hspace{-5mm}
\includegraphics[height=5cm, angle=0]{\dir H_3_slit} 
}
\centerline{
\includegraphics[height=5cm, angle=0]{\dir H_4_slit} \hspace{-5mm}
\includegraphics[height=5cm, angle=0]{\dir H_5_slit} \hspace{-5mm}
\includegraphics[height=5cm, angle=0]{\dir H_6_slit} 
}
\caption{ 
The first six eigenfunctions on $\Omega_{\sla}$ for the first six positive
eigenvalues. Densities $|\bE|$ (top) and $|\bH|$ (bottom). Corresponding arrow
fields $\bE$ (red) and $\bH$ (blue) on $\partial \Omega_{\sla}$.  
 \label{cornerless_1}
}
\end{figure}

\begin{figure}
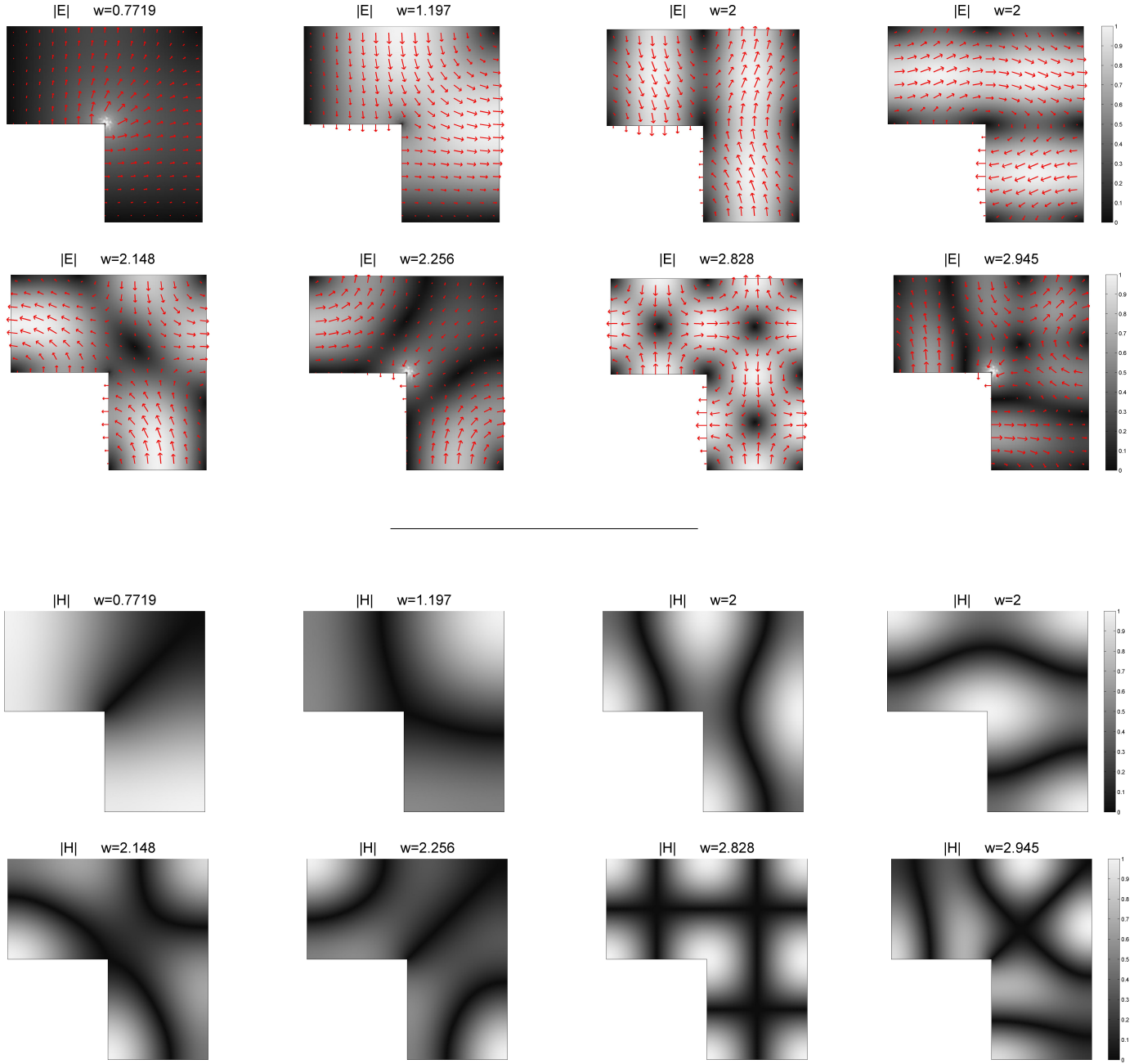

\centerline{
\includegraphics[height=4cm, angle=0]{\dir E_1_lshape} \hspace{-7mm}
\includegraphics[height=4cm, angle=0]{\dir E_2_lshape} \hspace{-7mm}
\includegraphics[height=4cm, angle=0]{\dir E_3_lshape} \hspace{-7mm}
\includegraphics[height=4cm, angle=0]{\dir E_4_lshape} 
}
\centerline{
\includegraphics[height=4cm, angle=0]{\dir E_5_lshape} \hspace{-7mm}
\includegraphics[height=4cm, angle=0]{\dir E_6_lshape} \hspace{-7mm}
\includegraphics[height=4cm, angle=0]{\dir E_7_lshape} \hspace{-7mm}
\includegraphics[height=4cm, angle=0]{\dir E_8_lshape}}
\rule[-0.1cm]{5cm}{0.01cm}
\vspace{1cm} \
\centerline{
\includegraphics[height=4cm, angle=0]{\dir H_1_lshape} \hspace{-7mm}
\includegraphics[height=4cm, angle=0]{\dir H_2_lshape} \hspace{-7mm}
\includegraphics[height=4cm, angle=0]{\dir H_3_lshape} \hspace{-7mm}
\includegraphics[height=4cm, angle=0]{\dir H_4_lshape} 
}
\centerline{
\includegraphics[height=4cm, angle=0]{\dir H_5_lshape} \hspace{-7mm}
\includegraphics[height=4cm, angle=0]{\dir H_6_lshape} \hspace{-7mm}
\includegraphics[height=4cm, angle=0]{\dir H_7_lshape} \hspace{-7mm}
\includegraphics[height=4cm, angle=0]{\dir H_8_lshape}}
\caption{Eigenfunctions on $\tilde{\Omega}_{\lsp}$ associated to the first eight
positive eigenvalues. Densities $|\bE|$ (top) and $|H|$ (bottom). Corresponding
arrow fields $\bE$. We have re-scaled each individual density to have as
maximum the value 1.
 \label{Lshape_eigenfunctions}
}
\end{figure}

\begin{figure}
\centerline{
\includegraphics[height=5cm, angle=0]{\dir E_1_fichera} \hspace{-5mm}
\includegraphics[height=5cm, angle=0]{\dir E_2_fichera} \hspace{-5mm}
\includegraphics[height=5cm, angle=0]{\dir E_3_fichera} 
}
\centerline{
\includegraphics[height=5cm, angle=0]{\dir E_4_fichera} \hspace{-5mm}
\includegraphics[height=5cm, angle=0]{\dir E_5_fichera} \hspace{-5mm}
\includegraphics[height=5cm, angle=0]{\dir E_6_fichera} 
}
\rule[-0.1cm]{5cm}{0.01cm}
\centerline{
\includegraphics[height=5cm, angle=0]{\dir H_1_fichera} \hspace{-5mm}
\includegraphics[height=5cm, angle=0]{\dir H_2_fichera} \hspace{-5mm}
\includegraphics[height=5cm, angle=0]{\dir H_3_fichera} 
}
\centerline{
\includegraphics[height=5cm, angle=0]{\dir H_4_fichera} \hspace{-5mm}
\includegraphics[height=5cm, angle=0]{\dir H_5_fichera} \hspace{-5mm}
\includegraphics[height=5cm, angle=0]{\dir H_6_fichera} 
}
\caption{ 
The first six eigenfunctions on $\Omega_{\lstd}$ for the first six positive
eigenvalues. Densities $|\bE|$ (top) and $|\bH|$ (bottom). Corresponding arrow
fields $\bE$ (red) and $\bH$ (blue) on $\partial \Omega_{\lstd}$. 
 \label{fichera}
}
\end{figure}

\end{document}